\newcounter{count}
\numberwithin{count}{section}
\newtheorem{Lemma}[count]{Lemma}
\newtheorem{Definition}[count]{Definition}
\newtheorem{Theorem}[count]{Theorem}
\begin{document}

\author[T. H.~Nguyen]{Thu Hien Nguyen}

\address{Department of Mathematics \& Computer Sciences, V. N. Karazin Kharkiv National University,
4 Svobody Sq., Kharkiv, 61022, Ukraine}
\email{nguyen.hisha@gmail.com }

\author[A.~Vishnyakova]{Anna Vishnyakova}
\address{Department of Mathematics \& Computer Sciences, V. N. Karazin Kharkiv National University,
4 Svobody Sq., Kharkiv, 61022, Ukraine}
\email{anna.m.vishnyakova@univer.kharkov.ua}

\title[Entire functions of the Laguerre-P\'olya class]
{On the necessary condition for entire function with the increasing second quotients of 
Taylor coefficients to belong to the Laguerre-P\'olya class }

\begin{abstract}

For an entire function $f(z) = \sum_{k=0}^\infty a_k z^k, a_k>0,$  we show that $f$
 does not belong to the Laguerre-P\'olya class  if the quotients $\frac{a_{n-1}^2}{a_{n-2}a_n}$ are increasing in $n$,
and  $c:= \lim\limits_{n\to \infty} \frac{a_{n-1}^2}{a_{n-2}a_n}$ is smaller than an absolute constant $q_\infty$   
$(q_\infty\approx 3{.}2336) .$ 

\end{abstract}

\keywords {Laguerre-P\'olya class; entire functions of order zero; real-rooted 
polynomials; multiplier sequences; complex zero decreasing sequences}

\subjclass{30C15; 30D15; 30D35; 26C10}

\maketitle

\section{Introduction}

The zero distribution of  entire functions, its sections and tails have been 
studied by many authors, see, for example, the remarkable
survey of the topic in \cite{iv}.  In this paper we investigate a new necessary 
condition under which some special entire functions have only real zeros. 
First, we need the definition of the 
famous Laguerre-P\'olya class.

{\bf Definition 1}.  A real entire function $f$ is said to be in the {\it
Laguerre-P\'olya class}, written $f \in \mathcal{L-P}$, if it can
be expressed in the form
\begin{equation}
\label{lpc}
 f(x) = c x^n e^{-\alpha x^2+\beta x}\prod_{k=1}^\infty
\left(1-\frac {x}{x_k} \right)e^{xx_k^{-1}},
\end{equation}
where $c, \alpha, \beta, x_k \in  \mathbb{R}$, $x_k\ne 0$,  $\alpha \ge 0$,
$n$ is a nonnegative integer and $\sum_{k=1}^\infty x_k^{-2} <
\infty$. As usual, the product on the right-hand side can be
finite or empty (in the latter case the product equals 1).

This class is essential in the theory of entire functions due to the fact that  the polynomials 
with only real zeros converge locally uniformly to these and only these 
functions. The following  prominent theorem 
states an even  stronger fact. 

{\bf Theorem A} (E.Laguerre and G.P\'{o}lya, see, for example,
\cite[p. 42-46]{HW}). {\it   

(i) Let $(P_n)_{n=1}^{\infty},\  P_n(0)=1, $ be a sequence
of complex polynomials having only real zeros which  converges uniformly in
the circle $|z|\le A, A > 0.$ Then this sequence converges
locally uniformly to an entire function
 from the $\mathcal{L-P}$ class.

(ii) For any $f \in \mathcal{L-P}$ there is a
sequence of complex polynomials with only real zeros which
converges locally uniformly to $f$.}

For various properties and characterizations of the
Laguerre-P\'olya class see 
\cite[p. 100]{pol}, \cite{polsch}  or  \cite[Kapitel II]{O}.

Note that for a real entire function (not identically zero) of order less than $2$ having 
only real zeros is  equivalent to belonging to the Laguerre-P\'olya class. The situation is 
different when an entire function is of order $2$. For example, the function 
$f_1(x)= e^{- x^2}$ belongs to the Laguerre-P\'olya class, but the function 
$f_2(x)= e^{x^2}$ does not.

Let  $f(z) = \sum_{k=0}^\infty a_k z^k$  be an entire function with positive coefficients. 
We define the quotients $p_n$ and $q_n$:

\begin{eqnarray}
\label{qqq} &  p_n=p_n (f):=\frac {a_{n-1}}{a_n},\ n\geq 1;
\\
\nonumber &
q_n=q_n(f):=\frac {p_n}{p_{n-1}}=\frac {a_{n-1}^2}{a_{n-2}a_n},\
n\geq 2.
\end{eqnarray}

The following formulas can be verified by straight forward calculations.

\begin{eqnarray}
\label{defq}
& a_n=\frac {a_0}{p_1 p_2 \ldots p_n},\ n\geq 1\ ; \\
\nonumber & a_n=\frac
{a_1}{q_2^{n-1} q_3^{n-2} \ldots q_{n-1}^2 q_n} \left(
\frac{a_1}{a_0} \right) ^{n-1},\ n\geq 2.
\end{eqnarray}

Deciding whether a given entire function has only real zeros is a rather subtle problem. 
In 1926, J. I. Hutchinson found the following sufficient condition for an entire function 
with positive coefficients to have only real zeros.

{\bf Theorem B} (J. I. Hutchinson, \cite{hut}). { \it Let $f(z)=\sum_{k=0}^\infty a_k z^k$, $a_k > 0$ for all $k$. 
Then $q_n(f)\geq 4$, for all $n\geq 2,$  
if and only if the following two conditions are fulfilled:\\
(i) The zeros of $f(z)$ are all real, simple and negative, and \\
(ii) the zeros of any polynomial $\sum_{k=m}^n a_kz^k$, $m < n,$  formed 
by taking any number 
of consecutive terms of $f(z) $, are all real and non-positive.}

For some extensions of Hutchinson's results see,
for example, \cite[\S 4]{cc1}. 

We will use the well-known notion of a complex zero decreasing sequence. 
For a real polynomial $P $  we denote by $Z_c(P)$ the number
of nonreal zeros of $P$ counting multiplicities.

{\bf Definition 2}.  A sequence $(\gamma_k)_{k=0}^\infty$ of real
numbers is said to be a complex zero decreasing sequence (we write $(\gamma_k)_{k=0}^\infty 
\in \mathcal{CZDS}$), if
\begin{equation}
\label{czds}
 Z_c\left(\sum_{k=0}^n \gamma_k a_k z^k\right) \leq Z_c\left(\sum_{k=0}^n a_k z^k\right),
\end{equation}
for any real polynomial $\sum_{k=0}^n a_k z^k .$ 

The existence of nontrivial 
$\mathcal{CZDS}$ sequences is a consequence of the following remarkable 
theorem proved by Laguerre and extended by P\'olya.

{\bf Theorem C}  (G. P\'olya, see \cite{gpol} or
\cite[pp. 314-321]{pol}).   {\it Let $f$ be an entire function from the
 Laguerre-P\'olya class  having only negative zeros. Then
$\left(f(k)\right)_{k=0}^\infty \in \mathcal{CZDS}$.}

As it follows from the  theorem above,
\begin{equation}
\label{m1}
\left(a^{-k^2}\right)_{k=0}^\infty\in\mathcal{CZDS},\  a\geq 1,\quad
\left(\frac{1}{k!}\right)_{k=0}^\infty\in \mathcal{CZDS}.
\end{equation}

The entire function $g_a(z) =\sum _{j=0}^{\infty} z^j a^{-j^2}$, $a>1,$
a so called \textit{partial theta-function},  was investigated in the paper \cite{klv}. 
Simple calculations show that $q_n(g_a)=a^2$ for all $n.$ 
Since $\left(a^{-k^2}\right)_{k=0}^\infty\in\mathcal{CZDS}\ $ ,  for  $a\geq 1,$
we conclude that for every $n\geq 2$ there exists a constant $c_n >1$ such that  
$S_{n}(z,g_a):=\sum _{j=0}^{n} z^j a^{-j^2} \in \mathcal{L-P}$
$ \ \Leftrightarrow \ a^2 \geq c_n$.

The  survey \cite{War} by S.O. Warnaar contains the history of
investigation of the partial theta-function and its interesting properties.

{\bf Theorem D} (O. Katkova, T. Lobova, A. Vishnyakova, \cite{klv}).  {\it There exists a constant $q_\infty $
$(q_\infty\approx 3{.}23363666) $ such that:
\begin{enumerate}
\item
$g_a(z) \in \mathcal{L-P} \Leftrightarrow \ a^2\geq q_\infty ;$
\item
$g_a(z) \in \mathcal{L-P} \Leftrightarrow \ $  there exists $x_0 \in (- a^3, -a)$ such that $ \  g_a(x_0) \leq 0;$
\item
for a given $n\geq 2$ we have $S_{n}(z,g_a) \in \mathcal{L-P}$ $ \ \Leftrightarrow \ $
there exists $x_n \in (- a^3, -a)$ such that $ \ S_n(x_n,g_a) \leq 0;$
\item
$ 4 = c_2 > c_4 > c_6 > \ldots $  and    $\lim_{n\to\infty} c_{2n} = q_\infty ;$
\item
$ 3= c_3 < c_5 < c_7 < \ldots $  and    $\lim_{n\to\infty} c_{2n+1} = q_\infty .$

\end{enumerate}}

There is a series of works by V.P. Kostov dedicated to the interesting properties of zeros of the 
partial theta-function and its  derivative (see \cite{kos0}, \cite{kos1}, \cite{kos2}, \cite{kos3}, \cite{kos03},
\cite{kos04}, \cite{kos4}, \cite{kos5} and \cite{kos5.0}). For example, in  \cite{kos1}, V.P.~Kostov studied 
the so-called spectrum of the partial theta function, i.e. the set of values of 
$a>1$ for which the function  $g_a $ has a multiple real zero. 

{\bf Theorem E} (V.P. Kostov, \cite{kos1}).
{\it
\begin{enumerate}
\item
The spectrum $\Gamma$ of the partial theta-function consists of countably many values of $a$
denoted by $\widetilde{a}_1>\widetilde{a}_2 > \ldots > \widetilde{a}_k>\ldots  > 1,$
$\lim_{j \to \infty}\widetilde{a}_j = 1.$
\item
For $\widetilde{a}_k \in \Gamma$ the function $g_{\widetilde{a}_k}$ has exactly 
one multiple real zero which is of multiplicity 2 and is the rightmost of its real zeros.
\item
For $a \in (\widetilde{a}_{k+1}, \widetilde{a}_{k})$ the fucntion $ g_a$ has exactly $k$
complex conjugate pairs of zeros (counted with multiplicities). 
\end{enumerate}}

A wonderful paper \cite{kosshap} among the other results explains the role 
of the constant $q_\infty $  in the study of the set of entire functions with positive 
coefficients having all Taylor truncations with only real zeros. 

{\bf Theorem F} (V.P. Kostov, B. Shapiro, \cite{kosshap}).
{\it Let $f(z) = \sum_{k=0}^\infty a_k z^k$ be an entire function 
with positive coefficients and $S_n(z) = \sum_{j=0}^n a_j z^j$ be its sections.
Suppose that there exists  $ N\in {\mathbb{N}},$ such that for all  $  n \geq N$ the sections
$S_n(z) = \sum_{j=0}^n a_j z^j$ belong to the Laguerre-P\'olya class.
Then $\lim\inf_{n\to \infty} q_n(f) \geq q_\infty$.
}

In \cite{klv1}, some  entire functions  with a convergent sequence of second quotients 
of coefficients  are investigated. The main question of \cite{klv1} is whether a function 
and its Taylor sections belong to the Laguerre-P\'olya class. In \cite{BohVish} and 
\cite{Boh}, some important special functions with increasing sequence of second quotients 
of Taylor coefficients  are studied. 

In the previous paper \cite{ngthv1}, we  have studied the entire functions with positive Taylor coefficients such that 
$q_n(f)$ are decreasing in $n$.

{\bf Theorem  G} (T. H. Nguyen, A. Vishnyakova, \cite{ngthv1}).
{\it Let $f(z)=\sum_{k=0}^\infty a_k z^k $, $a_k > 0$ for all $k$, be an 
entire function.  Suppose that $q_n(f)$ are decreasing in $n$, i.e.  $q_2 \geq q_3 \geq 
q_4 \geq \ldots, $  and  $\lim\limits_{n\to \infty} q_n(f) = b \geq q_\infty$. 
Then all the zeros of $f$ are  real and negative, in other words  $f \in \mathcal{L-P}$.}

It is easy to see that, if only the estimation of $q_n(f)$ from below is given
and the assumption of monotonicity is omitted, then the constant $4$ in $q_n (f) \geq 4 $ 
is the smallest possible to conclude that $f \in \mathcal{L-P}$. 

In this paper, we study the case when $q_n(f)$ are increasing in $n$ and have obtained the following theorem.

\begin{Theorem}
\label{th:mthm1}
Let $f(z)=\sum_{k=0}^\infty a_k z^k $, $a_k > 0$ for all $k,$  be an 
entire function.  Suppose that the quotients $q_n(f)$ are increasing in $n$, 
and  $\lim\limits_{n\to \infty} q_n(f) = c < q_\infty$. 
Then the function $f$ does not belong to the  Laguerre-P\'olya class.
\end{Theorem}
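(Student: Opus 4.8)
The plan is to argue by contradiction: assume $f\in\mathcal{L-P}$ and show that this forces $c\ge q_\infty$. First I would normalise $a_0=a_1=1$; this is harmless, because replacing $f(z)$ by $\alpha f(\beta z)$ multiplies $a_k$ by $\alpha\beta^k$ and hence changes neither the quotients $q_n$ nor membership in $\mathcal{L-P}$. With this normalisation $p_1=1$ and $p_2=q_2$. I then set $a=\sqrt{c}$ and compare $f$ with the correspondingly normalised partial theta-function $G(z)=g_a(az)=\sum_k c^{-k(k-1)/2}z^k$, for which $q_n(G)=c$ and $p_n(G)=c^{\,n-1}$. Since $c<q_\infty$, Theorem D(1) gives $g_a\notin\mathcal{L-P}$, and then Theorem D(2) gives $g_a(x)>0$ for every $x\in(-a^3,-a)$; after rescaling this says $G(x)>0$ on the first gap $(-p_2(G),-p_1(G))=(-c,-1)$. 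Because $q_n(f)\le c$ for all $n$ and $p_n=\prod_{i=2}^n q_i$, the coefficients of $f$ dominate those of $G$, while $p_n(f)\le p_n(G)$.

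The core of the argument is to transfer this positivity to $f$ on its own first gap, that is, to prove that
\[ f(x)>0\qquad\text{for all }x\in(-p_2,-p_1)=(-q_2,-1). \]
Writing $x=-t$ with $t\in(1,q_2)$ and pairing the terms of the alternating series, one has $f(-t)=1-S(t)$, where $S(t)=\sum_{j\ge1}a_{2j-1}t^{2j-1}(1-t/p_{2j})$ is a series of positive terms (positive because $t<p_2\le p_{2j}$), so the required inequality is exactly $S(t)<1$ on $(1,q_2)$. For $G$ the analogous bound $S_G(t)<1$ on $(1,c)$ is precisely the positivity supplied by Theorem D(2), and the exact constant $q_\infty$ enters here and nowhere else. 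I expect this comparison to be the main obstacle: although $a_{2j-1}(f)\ge a_{2j-1}(G)$ and $p_{2j}(f)\le p_{2j}(G)$, these two monotonicities act in opposite directions on each block $a_{2j-1}t^{2j-1}(1-t/p_{2j})$, so a crude coefficientwise comparison fails and one must instead run a careful block-by-block estimate that uses the increase of $q_n$ (equivalently, the log-convexity of the sequence $(p_n)$) together with the fact that we only need the bound on the shorter interval $(1,q_2)\subseteq(1,c)$.

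Granting the positivity on the first gap, the second step turns it into a contradiction by locating the zeros of $f$. Since all $a_k>0$, the function $f$ is transcendental, so if $f\in\mathcal{L-P}$ it is either a constant multiple of $e^{\beta z}$ --- which is excluded here, as that function has the \emph{decreasing} quotients $q_n=n/(n-1)$ --- or it has at least one negative zero. Let $-r_1$ be the zero of least modulus. From $a_1/a_0=\beta+\sum_k r_k^{-1}\ge r_1^{-1}$ one gets $r_1\ge p_1$, hence $-r_1\le -p_1$, and in fact the inequality is strict unless $f$ reduces to a single linear factor (again impossible, since every $a_k>0$). It then remains to show the complementary bound $r_1\le p_2$, so that $-r_1$ lies in the first gap $[-p_2,-p_1]$ where $f(-r_1)=0$, contradicting the strict positivity just established.

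Thus a second lemma is needed: that the increase of $q_n$ forces the least zero to satisfy $r_1\le p_2=a_1/a_2$. This bound is genuinely a consequence of monotonicity and fails without it --- for $e^z$, and for $(1+z/r)^n$ with $n\ge4$, the least zero lies far to the left of $-p_2$, and precisely these functions have non-increasing $q_n$. I would prove it from the symmetric-function relations that the monotonicity $q_2\le q_3\le\cdots$ imposes among $\sum_k r_k^{-1}$, $\sum_{i<j}r_i^{-1}r_j^{-1}$ and $\max_k r_k^{-1}$, using the convergence $\sum_k r_k^{-2}<\infty$ to justify the manipulations for infinitely many zeros. Combining the two steps yields the contradiction and shows $f\notin\mathcal{L-P}$.
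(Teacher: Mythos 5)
Your overall architecture is reasonable, but the argument has a genuine hole exactly where the constant $q_\infty$ has to do its work: the positivity of $f$ on the relevant interval is never established, only flagged as ``the main obstacle.'' This step is the technical core of the theorem. Worse, the block decomposition you chose cannot be made to work term by term. Writing $f(-t)=1-\sum_{j\ge1}\bigl(a_{2j-1}t^{2j-1}-a_{2j}t^{2j}\bigr)$ and trying to bound each block from \emph{above} by the corresponding block of $G$ amounts to $(a_{2j-1}-a_{2j-1}^G)t^{2j-1}\le(a_{2j}-a_{2j}^G)t^{2j}$, i.e.\ a \emph{lower} bound $t\ge(a_{2j-1}-a_{2j-1}^G)/(a_{2j}-a_{2j}^G)$; since $a_{2j}\approx a_{2j-1}/p_{2j}$ is much smaller than $a_{2j-1}$, this ratio is large (already about $21$ for $j=2$ with $q_2=3$, $c=3.2$), so the per-block inequality fails throughout $(1,q_2)$. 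The grouping that works is the opposite one, $f(-t)=(1-t)+\sum_{k\ge1}\bigl(a_{2k}t^{2k}-a_{2k+1}t^{2k+1}\bigr)$, where each bracket must be bounded from \emph{below} by the theta-block $c^{-k(2k-1)}t^{2k}-c^{-k(2k+1)}t^{2k+1}$; that is an upper-bound condition on $t$, and the paper verifies it (its Lemma 2.2) by showing the bracket, as a function of $(q_2,\dots,q_{2k+1})$, decreases as each variable is successively raised to the next one and finally to $c$. Note also that the paper compares with \emph{sections} of the partial theta function and uses parts (3) and (5) of Theorem D rather than parts (1)--(2), because the lower bounds are assembled for a finite sum $S_{2m+1}$ whose tail is positive.

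Your second half is genuinely different from the paper's and, if completed, would be simpler. The paper locates a zero of $\varphi(z)=f(-z)$ in $\{|z|<q_2\}$ via a Rouch\'e estimate on $|z|=q_2$ together with Grace's apolarity theorem applied to $S_4$; you instead try to pin down the smallest zero $-r_1$ directly. That route is viable, but it silently requires $q_2\ge3$, which you never establish: the chain $1/r_1=\max_k r_k^{-1}\ge\bigl(\sum_k r_k^{-2}\bigr)/\bigl(\sum_k r_k^{-1}\bigr)\ge a_1^2-2a_2=1-2/q_2$ gives $r_1\le q_2$ precisely when $1-2/q_2\ge1/q_2$, i.e.\ $q_2\ge3$, and proving $q_2\ge3$ needs a separate argument (the paper's Lemma 2.1: Cauchy--Bunyakovsky--Schwarz applied to the power sums $\sum r_k^{-1}$, $\sum r_k^{-2}$, $\sum r_k^{-3}$, combined with $q_2\le q_3$ and $q_2<4$). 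You would also need positivity on the closed interval, since the smallest zero could in principle sit at $-q_2$ itself. As written, both halves are programmes rather than proofs, and the first one cannot be completed with the pairing you propose.
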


\section{Proof of Theorem~\ref{th:mthm1}}

Without loss of generality, we can assume that $a_0=a_1=1,$ since we can 
consider a function $g(z) =a_0^{-1} f (a_0 a_1^{-1}z) $  instead of 
$f(z),$ due to the fact that such rescaling of $f$ preserves its property of 
having real zeros and preserves the second quotients:  $q_n(g) =q_n(f)$ 
for all $n.$ During the proof we use the notations $p_n$ and $q_n$ instead of 
$p_n(f)$ and $q_n(f).$ So we can write $ f(x) = 1 + x + \sum_{k=2}^\infty 
\frac{ x^k}{q_2^{k-1} q_3^{k-2} \ldots q_{k-1}^2 q_k}.$ We will also consider
a function  $\varphi(x) = f(-x) = 1 - x + \sum_{k=2}^\infty  \frac{ (-1)^k x^k}
{q_2^{k-1} q_3^{k-2} \ldots q_{k-1}^2 q_k}$  instead of $f.$

Since the quotients $q_n$ are increasing in $n$, 
and  $\lim\limits_{n\to \infty} q_n = c < q_\infty,$ we conclude 
that $q_2 \leq q_\infty <4.$ The following lemma shows that 
for $q_2  <3$ we have  $\varphi \notin \mathcal{L-P}.$

\begin{Lemma}
\label{th:lm5}
Let $\varphi(z) = \sum_{k=0}^\infty (-1)^k a_k z^k$ be an entire function, 
$a_k > 0$ for all $k,$ $a_0 = a_1 = 1$,  and $q_n =q_n(\varphi)$ are increasing in $n$, i.e.  
$q_2 \leq q_3 \leq q_4 \leq \ldots . $  
If $\varphi \in \mathcal{L-P},$ then  $q_2(f) \geq 3$.
\end{Lemma}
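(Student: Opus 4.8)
The plan is to argue by contradiction: assume $\varphi \in \mathcal{L-P}$ together with $q_2 < 3$, and derive a contradiction with the monotonicity $q_2 \le q_3 \le \cdots$. Since $\varphi(x) = f(-x)$ and $f$ has positive Taylor coefficients, $\varphi \in \mathcal{L-P}$ forces $f \in \mathcal{L-P}$ with only negative zeros, so $f$ admits the canonical representation $f(z) = e^{\beta z}\prod_{k}(1 + \lambda_k z)$ with $\beta \ge 0$, $\lambda_k > 0$ and $\sum_k \lambda_k < \infty$. Writing $S_j = \sum_k \lambda_k^j$ and using $a_0 = a_1 = 1$, the normalization $a_1/a_0 = 1$ becomes $\beta + S_1 = 1$, and expanding $\log f$ gives $a_2 = \tfrac12(1 - S_2)$, $a_3 = \tfrac16 - \tfrac12 S_2 + \tfrac13 S_3$, and analogous expressions for the higher $a_n$ as signed polynomials in the power sums $S_2, S_3, \dots$. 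First I would translate the hypotheses into these coordinates: since $q_2 = 1/a_2$, the assumption $q_2 < 3$ is exactly $S_2 < 1/3$; each step $q_{n+1} \ge q_n$ is exactly $a_n^2 \ge a_{n-1}a_{n+1}$, a polynomial inequality in the $S_j$. The goal $q_2 \ge 3$ thus becomes the claim that the whole chain $q_2 \le q_3 \le \cdots$ is incompatible with $S_2 < 1/3$.

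The key difficulty, and where the argument must do genuine work, is that no fixed finite segment of these inequalities suffices. A direct computation (using $q_3/q_2 = a_2^3/a_3$ and $S_3 \ge 0$) shows that $q_3 \ge q_2$ alone only forces $S_2$ above roughly $0.21$, strictly below $1/3$; hence the conclusion truly exploits the full increasing sequence and not merely its first terms. I therefore expect the decisive ingredient to be an extremal principle: the value $3$ is precisely the cubic threshold $c_3$ of Theorem~D, the point where the third section $S_3(\cdot,g_a)$ of the partial theta-function with $a^2 = q_2$ enters $\mathcal{L-P}$. The natural route is to reduce the statement to this cubic. Concretely, the monotonicity $q_n \ge q_2$ gives the coefficientwise domination $a_n \le q_2^{-n(n-1)/2}$ with equality for $n \le 2$, so $f$ is dominated by the normalized theta-function of parameter $a^2 = q_2$; I would try to show that if $\varphi \in \mathcal{L-P}$ with $q_n$ nondecreasing, then its third section $1 - x + a_2 x^2 - a_3 x^3$ must itself have only real zeros, and that the discriminant of this cubic is nonnegative only when $q_2 \ge 3$.

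The main obstacle is exactly this passage from the full entire function to the cubic sectional condition, i.e. handling the infinitely many monotonicity constraints at once rather than one at a time. I would attempt to surmount it either by a limiting argument, noting that minimizing $q_2$ subject to $q_n$ nondecreasing drives all the quotients toward a common value and hence toward a theta-type function, whose cubic threshold is $c_3 = 3$ by Theorem~D; or by a monotone induction on the $a_n$ that accumulates the inequalities $a_n^2 \ge a_{n-1}a_{n+1}$ into the single bound $S_2 \ge 1/3$. Everything else, namely the coefficient formulas in terms of the $S_j$, the equivalence $q_2 < 3 \Leftrightarrow S_2 < 1/3$, and the discriminant analysis of the cubic, I expect to be routine once the sectional reduction is secured.
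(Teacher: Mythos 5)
Your proposal misdiagnoses the difficulty at the outset, and the route you then sketch has an unfilled hole at its center. You claim that the single constraint $q_3 \ge q_2$ cannot force $S_2 \ge 1/3$, but your supporting computation uses only $S_3 \ge 0$. Membership in $\mathcal{L-P}$ gives much more: since all zeros $z_k$ of $\varphi$ are real and positive, the Cauchy--Schwarz inequality applied to the power sums of the reciprocal roots gives $S_1 S_3 \ge S_2^2$, hence (using $S_1 \le 1$, from $\beta + S_1 = 1$ with $\beta \ge 0$) $S_3 \ge S_2^2$. Feeding this into your own inequality $a_2^3 \ge a_3$ (which is exactly $q_3 \ge q_2$ under the normalization $a_0=a_1=1$) yields $3(1-S_2)^3 \ge 4 - 12 S_2 + 8 S_2^2$, i.e. $(3S_2-1)(S_2^2-1)\le 0$, and since $S_2<1$ this forces $S_2 \ge 1/3$, i.e. $q_2 \ge 3$, outright. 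This is precisely the paper's proof, stated there via $\sigma_1(\sigma_1^3-3\sigma_1\sigma_2+3\sigma_3)\ge(\sigma_1^2-2\sigma_2)^2$, which gives $q_3(q_2-4)+3\ge 0$ and then $q_2(q_2-4)+3\ge 0$ from $q_3\ge q_2$ and $q_2<4$. Only the first monotonicity constraint is needed; the extremal machinery you propose is unnecessary.

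More seriously, the alternative path you sketch is not a proof. Its decisive step --- that $\varphi\in\mathcal{L-P}$ with nondecreasing $q_n$ forces the third section $1 - x + a_2x^2 - a_3x^3$ to have only real zeros --- is asserted, not argued, and it is far from obvious: sections of Laguerre--P\'olya functions need not have only real zeros, and Theorem~D's threshold $c_3=3$ applies to the partial theta function itself (where $q_2=q_3=a^2$), not to an arbitrary cubic with $q_2\le q_3$, so even granting the sectional reduction the discriminant analysis would be a two-parameter condition rather than simply $q_2\ge 3$. The ``limiting argument'' that minimizing $q_2$ drives all quotients to a common theta-type value is likewise only a heuristic. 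In short, you set up the right coordinates (the power sums $S_j$ and the translation $q_2<3\Leftrightarrow S_2<1/3$) but the one inequality that makes the lemma a short computation --- Newton/Cauchy--Schwarz on the reciprocal roots --- is missing, and what you substitute for it is not established.
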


\begin{proof}
Denote by  $0 < z_1 \leq z_2 \leq z_3 \leq  \ldots $  the real roots of $\varphi $. 
We observe that
$$0 \leq \sum_{k=1}^\infty \frac{1}{z_k^2} = \left( \sum_{k=1}^\infty \frac{1}{z_k}  \right)^2 - 
2 \sum_{1\leq i< j < \infty} \frac{1}{z_i z_j} = \left(\frac{a_1}{a_0}\right)^2 - 2\frac{a_2}{a_0},$$
whence  $q_2 \geq 2.  $

According to the Cauchy-Bunyakovsky-Schwarz inequality, we obtain
$$(\frac{1}{z_1} + \frac{1}{z_2} + ...)(\frac{1}{z_1^3} + \frac{1}{z_2^3} + 
\ldots) \geq (\frac{1}{z_1^2 }+ \frac{1}{z_2^2} + \ldots)^2.$$

By Vieta's formulas, we have $ \sigma_1:= \sum_{k=1}^\infty \frac{1}{z_k} =  \frac{a_1}{a_0},$ 
$\sigma_2 = \sum_{1<i<j<\infty} \frac{1}{z_iz_j} = \frac{a_2}{a_0},$ and
$\sigma_3 = \sum_{1<i<j<k<\infty} \frac{1}{z_iz_jz_k} =  \frac{a_3}{a_0}.$ 
Further, we need the following identities:
$\sum_{k=1}^\infty \frac{1}{z_1^2} = \sigma_1^2 - 2\sigma_2,$ and 
$\sum_{k=1}^\infty \frac{1}{z_1^3} = \sigma_1^3 - 3\sigma_1\sigma_2 + 3\sigma_3.$ 
Consequently, we have 
$$\sigma_1 (\sigma_1^3 - 3\sigma_1\sigma_2 + 3\sigma_3) \geq (\sigma_1^2 - 2\sigma_2)^2,$$
or
$$\frac{a_1^2a_2}{a_0^3} + 3\frac{a_1a_3}{a_0^2} - 4\frac{a_2^2}{a_0^2} \geq 0.  $$

Since $a_0 = a_1 = 1$ and $a_2 = \frac{1}{q_2}, a_3 = \frac{1}{q_2^2q_3}$, we get: 
$$q_3(q_2 - 4) + 3 \geq 0.$$
Since we have the conditions that $q_2 < 4$ and $q_2 \leq q_3,$ we conclude that
$$q_2(q_2 - 4) + 3 \geq 0.$$

Thus, we get that $q_2 \geq 3.$
\end{proof}

Further, we assume that $3 \leq q_2 < q_\infty.$ 

In order to prove Theorem~\ref{th:mthm1}, we need some more Lemmas.

\begin{Lemma}
\label{th:lm1}
Let $\varphi(x) =  1 - x + \sum_{k=2}^\infty  \frac{ (-1)^k x^k}
{q_2^{k-1} q_3^{k-2} \ldots q_{k-1}^2 q_k} $  be an entire function.
Suppose that   $q_2 \geq 2,$  $q_k$ 
are increasing in $k$,   i.e. $q_2 \leq q_3 \leq q_4 \ldots$, 
and $\lim\limits_{n \to \infty} q_n = 
c < q_\infty$.  Then for any $x \in [0, q_2]$ we have  $\varphi(x)>0$, i.e. 
there are no real roots of $\varphi$ in the segment $[0; q_2]$.
\end{Lemma}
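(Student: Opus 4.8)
The plan is to keep the authors' normalization $a_0=a_1=1$ and write $\varphi(x)=\sum_{k\ge0}(-1)^k b_k(x)$ with $b_k(x):=a_k x^k$. The first observation is that on $[0,q_2]$ the sequence $(b_k(x))_{k\ge1}$ is non-increasing: since $b_k/b_{k-1}=x/p_k$ and $p_k=q_2q_3\cdots q_k\ge q_2\ge x$ for every $k\ge2$, we get $b_k\le b_{k-1}$. This disposes of $[0,1]$ at once: there $b_0=1\ge x=b_1\ge b_2\ge\cdots$, so grouping $\varphi=(b_0-b_1)+(b_2-b_3)+\cdots$ gives $\varphi(x)\ge 1-x\ge0$, and because $b_2-b_3=b_2(1-x/p_3)>0$ for $x\in(0,1]$ the inequality is strict; together with $\varphi(0)=1$ this yields $\varphi>0$ on $[0,1]$. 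It remains to treat $x\in(1,q_2]$.

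At the right endpoint the linear and the first quadratic term cancel, $b_1(q_2)=b_2(q_2)=q_2$, and a short computation gives $\varphi(q_2)=1-\tfrac{q_2}{q_3}T$ with $T=1-\tfrac1{q_3q_4}+\tfrac1{q_3^2q_4^2q_5}-\cdots\in(0,1)$, so $\varphi(q_2)>1-q_2/q_3\ge0$ since $q_3\ge q_2$; thus $x=q_2$ is settled directly. For the open interval $(1,q_2)$ my plan is to bound $\varphi$ from below by the partial theta-function evaluated at the \emph{limiting} quotient $c=\lim q_n$. Set $\theta_c(x):=\sum_{k\ge0}(-1)^k c^{-k(k-1)/2}x^k=g_{\sqrt c}(-\sqrt c\,x)$. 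Since $q_n\le c$ for all $n$ we have $a_k\ge c^{-k(k-1)/2}$, hence $\varphi(x)-\theta_c(x)=\sum_{k\ge2}(-1)^k e_k$ with $e_k:=(a_k-c^{-k(k-1)/2})x^k\ge0$. If $(e_k)_{k\ge2}$ is non-increasing on $(1,q_2]$, this alternating series starts with $+e_2$ and is therefore $\ge0$, so $\varphi\ge\theta_c$ there. Finally, because $c<q_\infty$, Theorem D(1) gives $g_{\sqrt c}\notin\mathcal{L-P}$, and then Theorem D(2) gives $g_{\sqrt c}(x_0)>0$ for every $x_0\in(-c^{3/2},-c^{1/2})$; via $x_0=-\sqrt c\,x$ this is precisely $\theta_c(x)>0$ for $x\in(1,c)\supseteq(1,q_2)$. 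Hence $\varphi(x)\ge\theta_c(x)>0$ on $(1,q_2)$, which finishes the argument.

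The main obstacle is exactly the monotonicity of $(e_k)$; note that the naive comparison with the partial theta at the \emph{smallest} quotient $q_2$ does \emph{not} work, because raising an even-indexed $q_m$ lowers $\varphi$, so one is forced to the limiting quotient $c$. Since $e_{k+1}/e_k$ is increasing in $x$, the step $e_{k+1}\le e_k$ is hardest at $x=q_2$, where it reduces (dividing by $a_k$) to $u_k\bigl(1-q_2/c^k\bigr)\le 1-q_2/p_{k+1}$ with $u_k:=c^{-k(k-1)/2}/a_k=\prod_{j=2}^k(q_j/c)^{k-j+1}\in(0,1]$. The delicacy is that $u_k$ and $1-q_2/p_{k+1}$ both degenerate as the $q_j$ approach $c$, so the bound cannot be obtained by the crude estimate $u_k\le1$; it requires a genuine comparison of the product $u_k$ against $p_{k+1}=q_2\cdots q_{k+1}$, and this is where the hypotheses $q_2\ge2$ and the increasing, bounded nature of $(q_n)$ must enter quantitatively. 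Once this monotonicity is established, the rest of the proof is soft.
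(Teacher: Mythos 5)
Your handling of $[0,1]$ and of the endpoint $x=q_2$ is correct and essentially identical to the paper's, and your overall strategy for $(1,q_2)$ --- minorize $\varphi$ by the alternating theta series $\theta_c$ at the \emph{limiting} quotient $c$ and then invoke Theorem D to get $\theta_c>0$ on $(1,c)$ --- is exactly the paper's strategy (the paper routes through the sections $S_{2m+1}$ and parts (3),(5) of Theorem D instead of the full function and parts (1),(2), but that is a cosmetic difference). The problem is that the one step carrying all the weight, the monotonicity $e_{k+1}\le e_k$ of $e_k=(a_k-c^{-k(k-1)/2})x^k$, is left unproven, and you yourself flag it as the main obstacle. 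This is a genuine gap, and under the lemma's stated hypothesis $q_2\ge 2$ the claim is in fact false: take $q_2=q_3=2$ and $q_j=3$ for $j\ge4$ (so $c=3<q_\infty$, the sequence is increasing, $q_2\ge2$). At $x=2$ one gets $e_2=(\tfrac12-\tfrac13)\cdot4=\tfrac23$ while $e_3=(\tfrac18-\tfrac1{27})\cdot8=\tfrac{19}{27}>\tfrac23$, and by continuity $e_3>e_2$ persists for $x$ slightly below $2$, i.e.\ inside $(1,q_2)$. Via your own reduction $u_k(1-q_2/c^k)\le 1-q_2/p_{k+1}$ with $k=2$, $q_2=q_3=2$, the requirement is $c^3-4c^2+8=(c-2)(c^2-2c-4)\ge0$, i.e.\ $c\ge1+\sqrt5\approx3.2361>q_\infty$, so it fails for \emph{every} admissible $c\in(2,q_\infty)$. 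Note this kills not only the full monotonicity you ask for but already the weaker pairwise inequality $e_2\ge e_3$ that would suffice for the alternating-series bound; so the comparison $\varphi\ge\theta_c$ genuinely requires the stronger input $q_2\ge3$ (which in the paper is available as a standing assumption after Lemma~\ref{th:lm5}, though it is not written into the hypotheses of Lemma~\ref{th:lm1}).

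For comparison, the paper does not attempt term-by-term monotonicity of $(e_k)$. It discards the positive tail $R_{2m+2}$, groups $S_{2m+1}(x,\varphi)$ into consecutive pairs $b_{2k}-b_{2k+1}$, and proves the pairwise estimate $b_{2k}-b_{2k+1}\ge \beta_{2k}-\beta_{2k+1}$ (equivalently $e_{2k}\ge e_{2k+1}$) by viewing the pair as a function $F(q_2,\dots,q_{2k+1})$ and showing it is decreasing in each variable in turn, successively substituting $q_2\mapsto q_3$, $q_3\mapsto q_4$, \dots, and finally everything $\mapsto c$; each one-variable monotonicity check uses $x\le q_2\le q_j$ quantitatively, and this is precisely where $q_2\ge3$ enters (for $q_2=2$ the very first check, $x<\tfrac13q_3^2$, can fail). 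So to complete your proof you would have to carry out essentially this computation; the ``soft'' part of your argument is fine, but the hard part is the whole lemma.
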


\begin{proof}  For $x \in [0;1]$ we have 
$$ 1 \geq x  > \frac{x^2}{q_2} >  \frac{x^3}{q_2^2 q_3}  > 
\frac{x^4}{q_2^3 q_3^2 q_4 } > \cdots  , $$
whence 
\begin{equation}
\label{mthm1.1}
 \varphi(x) >0 \quad \mbox{for all}\quad x\in [0;1].
\end{equation}

Suppose that   $x \in (1; q_2].$  Then we obtain 
\begin{equation}
\label{mthm1.2}  1 <  x  \geq  \frac{x^2}{q_2} > \frac{x^3}{q_2^2 q_3} > 
\cdots >  \frac{  x^k}{q_2^{k-1} q_3^{k-2} \ldots q_{k-1}^2 q_k} > \cdots 
\end{equation}

For an arbitrary  $m \in {\mathbb{N}}$  we have
$$\varphi(x) = \left(1 - x + \sum_{k=2}^{2m+1}  \frac{ (-1)^k x^k}
{q_2^{k-1} q_3^{k-2} \ldots q_{k-1}^2 q_k}\right) $$
$$ +  \sum_{k=2m+2}^\infty  \frac{ (-1)^k x^k}
{q_2^{k-1} q_3^{k-2} \ldots q_{k-1}^2 q_k} =: S_{2m+1}(x, \varphi) + 
R_{2m+2}(x, \varphi).$$
By (\ref{mthm1.2}) we obtain $R_{2m+2}(x, \varphi) >0$ for all   $x \in (1; q_2],$
or
\begin{equation}
\label{mthm1.3}   \varphi(x) > S_{2m+1}(x, \varphi)\quad \mbox{for all} \quad 
x \in (1; q_2], m \in {\mathbb{N}}.
\end{equation} 
It remains to prove that there exists  $m \in {\mathbb{N}}$ such that 
$S_{2m+1}(x, \varphi) >0 $ for all  $x \in (1; q_2].$ We have
\begin{eqnarray}
\label{m5}
&S_{2m+1}(x, \varphi) = (1 - x) + \left(\frac{x^2}{q_2} 
- \frac{x^3}{q_2^2 q_3}\right)
+ \left(\frac{x^4}{q_2^3 q_3^2 q_4} - \frac{x^5}{q_2^4 q_3^3 q_4^2 q_5}\right)+ \ldots \\
\nonumber & + 
\left(\frac{x^{2m}}{q_2^{2m-1} q_3^{2m-2} \cdot \ldots \cdot 
q_{2m-1}^2 q_{2m}} - \frac{x^{2m+1}}{q_2^{2m} q_3^{2m-1} \cdot \ldots \cdot 
q_{2m}^2 q_{2m+1}}\right).
\end{eqnarray}

Under our assumptions,  $q_k$ 
are increasing in $k$,   and $\lim\limits_{n \to \infty} q_n = 
c .$  We prove that for any fixed $k = 1, 2, \ldots, m$ and $x \in (1; q_2]$  the following inequality holds:
$$\frac{x^{2k}}{q_2^{2k-1} q_3^{2k-2}\cdot \ldots \cdot q_{2k}} -
 \frac{x^{2k+1}}{q_2^{2k} q_3^{2k-1}\cdot \ldots \cdot q_{2k}^2 
q_{2k+1}}  \geq $$
 $$\frac{x^{2k}}{c^{2k-1}\cdot c^{2k-2}\cdot \ldots \cdot c} - \frac{x^{2k+1}}{c^{2k} 
\cdot c^{2k-1}\cdot \ldots \cdot c^2 \cdot c}.$$

For  $x \in (1; q_2]$ and $k=1, 2, \ldots , m,$ we define the following function
$$F(q_2, q_3, \ldots, q_{2k}, q_{2k+1}):=\frac{x^{2k}}{q_2^{2k-1} q_3^{2k-2}\cdot \ldots \cdot q_{2k}} -
 \frac{x^{2k+1}}{q_2^{2k} q_3^{2k-1}\cdot \ldots \cdot q_{2k}^2 
q_{2k+1}}.$$ 
 We can observe that
$$\frac{\partial F(q_2, q_3, \ldots, q_{2k}, q_{2k+1}) }
 {\partial q_2} = - \frac{(2k-1) x^{2k}}{q_2^{2k} q_3^{2k-2} \ldots q_{2k}} + 
 \frac{2k x^{2k+1}}{q_2^{2k+1}q_3^{2k-1} \ldots q_{2k}^2q_{2k+1}} < 0$$ $$\Leftrightarrow x < 
 \Big(1 - \frac{1}{2k} \Big) q_2 q_3 \ldots q_{2k}q_{2k+1}.$$ 
 
Thus, under our assumptions, the function $F(q_2, q_3, \ldots, q_{2k}, q_{2k+1})$ 
is decreasing in $q_2$.  Since $q_2 \leq q_3,$ 
$$F(q_2, q_3, q_4, \ldots, q_{2k}, q_{2k+1}) \geq F(q_3, q_3, q_4,  \ldots, q_{2k}, q_{2k+1})=$$
$$\frac{x^{2k}}{q_3^{4k-3} q_4^{2k-3} \ldots q_{2k}} - 
\frac{x^{2k+1}}{q_3^{4k-1}q_4^{2k-2} \ldots q_{2k+1}}.$$
Further we have 
$$\frac{\partial F(q_3, q_3, q_4,  \ldots, q_{2k}, q_{2k+1})}
 {\partial q_3} = - \frac{(4k-3) x^{2k}}{q_3^{4k-2} q_4^{2k-3} \ldots q _{2k}} 
 + \frac{(4k-1) x^{2k+1}}{q_3^{4k}q_4^{2k-2} \ldots q_{2k+1}} < 0$$ 
 $$\Leftrightarrow x < \frac{4k-3}{4k-1}q_3^2 q_4 \ldots q_{2k+1}. $$
 
Thus, under our assumptions, $F(q_3, q_3, q_4, \ldots, q_{2k}, q_{2k+1})$ is decreasing in $q_3$, and, since, $q_3 \leq q_4$
we obtain
 $$F(q_3, q_3, q_4 \ldots, q_{2k}, q_{2k+1}) \geq F(q_4, q_4, q_4, q_5,  \ldots, q_{2k}, q_{2k+1}).$$
 
Analogously, we obtain the following chain of inequalities
$$F(q_2, q_3, q_4, \ldots, q_{2k}, q_{2k+1}) \geq F(q_3, q_3, q_4,  \ldots, q_{2k}, q_{2k+1}) \geq $$  
$$ F(q_4, q_4, q_4, q_5,  \ldots, q_{2k}, q_{2k+1}) \geq \ldots  
 \geq F(q_{2k+1}, q_{2k+1}, \ldots,  q_{2k+1}, q_{2k+1}). $$
 
Further, we have
 
$$\frac{\partial F(q_{2k+1}, q_{2k+1}, \ldots,  q_{2k+1}, q_{2k+1}) }
{\partial q_{2k+1}}  = - \frac{(2k^2-k) x^{2k}}{q_{2k+1}^{2k^2-k+1}} + 
\frac{(2k^2+k) x^{2k+1}}{q_{2k+1}^{2k^2+k+1}} < 0$$
$$ \Leftrightarrow x < \frac{2k^2-k}{2k^2+k} q_{2k+1}^{2k}.$$
 
Thus, $F(q_{2k+1}, q_{2k+1}, \ldots,  q_{2k+1}, q_{2k+1})$ is decreasing in 
$q_{2k+1}$, and since $q_k$ 
are increasing in $k$,   and $\lim\limits_{n \to \infty} q_n = 
c ,$ we conclude that 
$$F(q_{2k+1}, q_{2k+1}, \ldots,  q_{2k+1}, q_{2k+1}) \geq   F(c, c, \ldots,  c, c) = $$
$$ \frac{x^{2k}}{ c^{k(2k-1)}} -
 \frac{x^{2k+1}}{c^{k(2k+1)}}.$$
 Substituting the last inequality in (\ref{m5}) for every  $x \in (1; q_2]$ and $k=1, 2, \ldots , m,$ , we get
\begin{eqnarray}
\label{mthm1.4}
& S_{2m+1}(x, \varphi) \geq (1 - x) + \left(\frac{x^2}{c} 
- \frac{x^3}{c^3}\right)
+ \left(\frac{x^4}{c^6} - \frac{x^5}{c^{10}}\right)+ \ldots \\
\nonumber & + \left(\frac{x^{2m}}{c^{m(2m-1)}} - \frac{x^{2m+1}}{c^{m(2m+1)} }\right) 
= \sum_{k=0}^{2m+1} \frac{x^k}{\sqrt{c}^{k(k-1)}} = S_{2m+1}( \sqrt{c} x, g_{\sqrt{c}}),
\end{eqnarray}
where $g_a$ is the partial theta-function and $S_{2m+1}(y, g_a)$ are its $(2m+1)$-th section
at the point  $y$. Since by our assumptions  $(\sqrt{c})^2 < q_\infty,$ using the statement (5) of Theorem D 
we obtain that there exists  $m \in {\mathbb{N}}$ such that $S_{2m+1}( y, g_{\sqrt{c}})\notin \mathcal{L-P}.$
Let us choose and fix such $m.$ By the statement (3) of Theorem D  we obtain that  for every $x$ such that
$\sqrt{c} < \sqrt{c} x < (\sqrt{c})^3$ we have $S_{2m+1}( \sqrt{c} x, g_{\sqrt{c}})>0.$ It means that 
for every $x: 1< x < c$ we have  $S_{2m+1}( \sqrt{c} x, g_{\sqrt{c}})>0, $ and, using (\ref{mthm1.4}) 
and (\ref{mthm1.3}),
$$  \varphi(x) > S_{2m+1}(x, \varphi) >0 \quad \mbox{for all} \quad  x \in (1; q_2)\subset (1; c). $$
It remains to prove that $ \varphi(q_2) >0.$ We have 
$$   \varphi(q_2) = \left(1 - q_2 + q_2 - \frac{q_2}{q_3}\right) + \left(\frac{q_2}{q_3^2 q_4} 
- \frac{q_2}{q_3^3 q_4^2 q_5}\right) $$
$$ +\left( \frac{q_2}{q_3^4 q_4^3 q_5^2 q_6} - \frac{q_2}{q_3^5 q_4^4 q_5^3 q_6^2 q_7} \right) +\ldots >0 $$
by our assumptions on $q_j.$
\end{proof}

\begin{Lemma}
\label{th:lm2}
Let $P(z) = 1 - z + \frac{z^2}{a} - \frac{z^3}{a^2b} + \frac{z^4}{a^3b^2c}$ be a polynomial, 
$3 \leq  a < 4,$ and $a \leq b \leq c.$ 
Then $$\min_{0 \leq \theta \leq 2\pi}|P(ae^{i \theta})| \geq \frac{a}{b^2c}.$$
\end{Lemma}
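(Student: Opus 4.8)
The plan is to substitute $z = a e^{i\theta}$ and bound the modulus of the resulting trigonometric polynomial from below. Since $P$ has real coefficients, $|P(a e^{i\theta})| = |P(a e^{-i\theta})|$, so I may restrict to $\theta \in [0,\pi]$. Put $\mu := \tfrac{a}{b^{2}c}$, the quantity to be bounded below. A direct computation gives
$$P(a e^{i\theta}) = 1 - a e^{i\theta} + a e^{2i\theta} - \tfrac{a}{b} e^{3i\theta} + \mu\, e^{4i\theta},$$
and, multiplying by $e^{-2i\theta}$ (which preserves the modulus),
$$e^{-2i\theta} P(a e^{i\theta}) = R(\theta) + i\, I(\theta),$$
where $R(\theta) = a - a\big(1+\tfrac{1}{b}\big)\cos\theta + (1+\mu)\cos 2\theta$ and $I(\theta) = \sin\theta\,\big[a\big(1-\tfrac{1}{b}\big) - 2(1-\mu)\cos\theta\big]$, so that $|P(a e^{i\theta})|^{2} = R^{2} + I^{2}$ and the goal is $R^{2}+I^{2} \ge \mu^{2}$. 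I would first record the boundary value $P(a) = 1 - \tfrac{a}{b} + \mu \ge \mu$, with equality exactly when $a = b$: the estimate is already sharp at $\theta = 0$, which signals that only tight arguments can work near there.

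Next I would reduce the number of parameters. Writing $t := \cos\theta$ and splitting off the top term $P = \widetilde P + \mu e^{4i\theta}$ with $\widetilde P = 1 - a e^{i\theta} + a e^{2i\theta} - \tfrac{a}{b} e^{3i\theta}$, one has the identity
$$|P(a e^{i\theta})|^{2} - \mu^{2} = |\widetilde P|^{2} + 2\mu\, W(t),$$
where $W(t) = \cos 4\theta - a\cos 3\theta + a\cos 2\theta - \tfrac{a}{b}\cos\theta$ depends only on $a,b,t$, and $|\widetilde P|^{2}$ is independent of $c$. Since $c$ enters only through $\mu$, and the right-hand side is affine and monotone in $1/c$ with limiting value $|\widetilde P|^{2} \ge 0$ as $c \to \infty$, it suffices to prove the inequality at the other endpoint $c = b$. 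This removes one variable and puts us in the extremal (and, by the boundary computation, tightest) regime $a \le b = c$.

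For the main step I would split on the sign of $R(\theta) - \mu$. As a function of $t$, $R - \mu = 2(1+\mu)t^{2} - a\big(1+\tfrac{1}{b}\big)t + (a-1-2\mu)$ is an upward quadratic, so the set $\{R < \mu\}$ is a single subinterval of $t$ lying just below $t = 1$ (a punctured neighborhood of $\theta = 0$). On its complement $R \ge \mu > 0$, whence $|P(a e^{i\theta})| \ge |R| = R \ge \mu$ and there is nothing to prove. On the delicate subinterval I would use the imaginary part: the bracket defining $I$ is positive for all $t \in [-1,1]$ when $3 \le a \le b$, so $I(\theta) > 0$ there, and the claim reduces to the polynomial inequality $I^{2} \ge \mu^{2} - R^{2} = (\mu - R)(\mu + R)$ in $t$, with parameters $a,b$ and $c = b$.

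The hard part will be closing this last inequality near $\theta = 0$, where it is genuinely sharp. Expanding at $\theta = 0$ in the critical configuration $a = b = c$ shows that $R - \mu = \big(\tfrac{a-3}{2} - 2a^{-2}\big)\theta^{2} + O(\theta^{4})$ and $I = (a - 3 + 2a^{-2})\theta + O(\theta^{3})$, so that the coefficient of $\theta^{2}$ in $|P(a e^{i\theta})|^{2} - \mu^{2}$ equals $(a-3)\big[(a-3)+5a^{-2}\big]$. This is nonnegative for $a \ge 3$ but vanishes to second order at $a = 3$, the smallest admissible value by Lemma~\ref{th:lm5}. Hence at the corner $a = b = c = 3$ the inequality touches zero to second order, crude triangle-inequality bounds are useless, and one must retain the exact quadratic-in-$t$ form of $R - \mu$ together with a sharp lower bound for $I^{2}$. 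I would therefore finish by rewriting the reduced polynomial inequality so that its nonnegativity is manifest, exhibiting a factorization that isolates the double contact at $(a,t) = (3,1)$ and whose remaining factor is visibly positive for $3 \le a \le b$ and $t \in [-1,1]$.
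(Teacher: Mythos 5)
Your setup is sound and several of your reductions are genuinely useful: the identity $|P(ae^{i\theta})|^2-\mu^2=|\widetilde P|^2+2\mu W(t)$ with $\mu=\tfrac{a}{b^2c}$ is correct, and since the right-hand side is affine in $\mu$ and nonnegative at $\mu=0$, it is indeed enough to treat $c=b$. The split on the sign of $R-\mu$, the observation that $\{R<\mu\}$ is a small $t$-interval near $t=1$, the positivity of the bracket in $I$, and the Taylor computation showing that the coefficient of $\theta^2$ in $|P|^2-\mu^2$ equals $(a-3)\bigl[(a-3)+5a^{-2}\bigr]$ at $a=b=c$ are all correct and correctly diagnose where the estimate is tight.

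However, the proof stops exactly where the lemma actually lives. On the bad interval you must prove $I^2\ge(\mu-R)(\mu+R)$, a quartic inequality in $t$ with parameters $a,b$; your final sentence promises ``a factorization that isolates the double contact at $(a,t)=(3,1)$ and whose remaining factor is visibly positive,'' but no such factorization is exhibited, and it is not clear one exists in a ``visibly positive'' form. Your own expansion shows that at $a=3$ the $\theta^2$ coefficient vanishes, so near the corner $a=b=c=3$, $\theta=0$ the conclusion hinges on the sign of the $\theta^4$ coefficient, and nothing in the proposal computes or controls it. Until that inequality is actually closed, the argument is a reduction, not a proof. For comparison, the paper takes a more pedestrian but complete route: it writes $|P(ae^{i\theta})|^2-\tfrac{a^2}{b^4c^2}$ as an explicit quartic $\tfrac{a}{b^2c}\,\psi(y)$ in $y=2\cos\theta$ (keeping $c$ free), disposes of $y\in[-2,0]$ by checking that the coefficients alternate in sign, and on $[0,2]$ introduces the auxiliary function $\chi(y)=\psi(y)-\tfrac{1}{b}(b-a)y\le\psi(y)$, showing $\chi''\ge0$ on $[0,2]$, $\chi'(2)\le0$ (hence $\chi$ decreasing there), and $\chi(2)\ge0$; the factor $(a-3)$ that you detected appears there in the identity $\chi'(2)=5(3-a)+bc(a-3)(3-b)\le0$. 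Adopting some such concrete closing argument, or actually producing and verifying your claimed factorization, is what is missing.
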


\begin{proof} By the direct calculation we have
$$|P(ae^{i\theta})|^2 =
 (1 - a\cos{\theta} + a\cos{2\theta} - \frac{a}{b}\cos{3\theta} + \frac{a}{b^2c}\cos{4\theta})^2 +$$
$$ (- a\sin{\theta} + a\sin{2\theta} - \frac{a}{b}\sin{3\theta} + \frac{a}{b^2c}\sin{4\theta})^2$$
$$=1 + 2a^2 + \frac{a^2}{b^2} + \frac{a^2}{b^4c^2} - 2a\cos{\theta} + 2a\cos{2\theta} - 2\frac{a}{b}\cos{3\theta}$$
$$+2\frac{a}{b^2c}\cos{4\theta} - 2a^2\cos{\theta} + 2\frac{a^2}{b}\cos{2\theta} - 2\frac{a^2}{b^2c}\cos{3\theta}$$
$$- 2\frac{a^2}{b}\cos{\theta} + 2\frac{a^2}{b^2c}\cos{2\theta} - 2\frac{a^2}{b^3c}\cos{\theta}.$$

Denote by $t := \cos{\theta}, t \in [-1;1].$ Since $\cos{2\theta} =  2t^2 - 1,$
$\cos{3\theta} = 4t^3 - 3t,$ and $\cos{4\theta} =   8t^4 - 8t^2 +1,$ we get

$$|P(ae^{i\theta})|^2 = \frac{16a}{b^2c}t^4 + \left(-\frac{8a}{b} - \frac{8a^2}{b^2c}\right)t^3 + 
\left(4a - \frac{16a}{b^2c} + 
\frac{4a^2}{b} + \frac{4a^2}{b^2c}\right)t^2 +$$
$$ \left(-2a + \frac{6a}{b} - 2a^2 + \frac{6a^2}{b^2c} - \frac{2a^2}{b} - \frac{2a^2}{b^3c}\right)t $$
$$ + \left(1 + 2a^2 + \frac{a^2}{b^2} + \frac{a^2}{b^4c^2} - 2a + \frac{2a}{b^2c} - 
\frac{2a^2}{b} - \frac{2a^2}{b^2c}\right).$$

We want to show that $\min_{0 \leq \theta \leq 2\pi} |P(ae^{i\theta})|^2 \geq \frac{a^2}{b^4c^2}.$
Using the last expression we see that the inequality we want to get is equivalent to the following: for
 all  $t \in [-1; 1]$
the next inequality holds
$$\frac{16a}{b^2c}t^4 - \frac{8a}{b} \big( 1+ \frac{a}{bc} \big)t^3 + 4a \big( 1 - \frac{4}{b^2c} + 
\frac{a}{b} + \frac{a}{b^2c}\big)t^2 - 2a\big( 1 - \frac{3}{b} +a - \frac{3a}{b^2c} + \frac{a}{b} $$
$$+ \frac{a}{b^3c} \big)t + \big( 1 + 2a^2 + \frac{a^2}{b^2} - 2a + \frac{2a}{b^2c} - \frac{2a^2}{b} - 
\frac{2a^2}{b^2c} \big) \geq 0.$$
Let $y := 2t,$  $y \in [-2; 2].$ We rewrite the last inequality in the form
$$\frac{a}{b^2c}y^4 - \frac{a}{b} \left( 1+ \frac{a}{bc} \right)y^3 + a \left( 1 - 
\frac{4}{b^2c} + \frac{a}{b} + \frac{a}{b^2c}\right)y^2 $$
$$- a\left( 1 - \frac{3}{b} +a - \frac{3a}{b^2c} + \frac{a}{b} + \frac{a}{b^3c} \right)y + $$
$$\left( 1 + 2a^2 + \frac{a^2}{b^2} - 2a + \frac{2a}{b^2c} - \frac{2a^2}{b} - \frac{2a^2}{b^2c} \right) \geq 0.$$
 
We note that the coefficient of $y^4$ is positive, and the coefficient of  $y^3$ is negative.  It is easy to show that the 
other coefficients are also sign-changing. For $y^2$:  $1 - \frac{4}{b^2c} > 0$ since $b^2c > 4$, 
thus, $1 + \frac{a}{b} + \frac{a}{b^2c} - \frac{4}{b^2c} = (1 - \frac{4}{b^2c}) + \frac{a}{b} + \frac{a}{b^2c} > 0.$
For $y$:   $1 + a + \frac{a}{b} + \frac{a}{b^3c} - \frac{3}{b} - \frac{3a}{b^2c} = (1 + a - \frac{3}{b}) + $
$(\frac{a}{b} - \frac{3a}{b^2c}) + \frac{a}{b^3c} > 0.$   Finally, $1 + 2a^2 + \frac{a^2}{b^2} - 2a - 2\frac{a^2}{b} - 
2\frac{a^2}{b^2c} + 2\frac{a}{b^2c} =$
$(1 + a^2 - 2a) + (a^2 - 2\frac{a^2}{b}) + (\frac{a^2}{b^2} - 2\frac{a^2}{b^2c}) + 2\frac{a}{b^2c} > 0$ since 
$1 - 2a + a^2  \geq 0;$  $a^2 - 2\frac{a^2}{b} > 0 $  and $\frac{a^2}{b^2} - 2\frac{a^2}{b^2c} > 0$ by
our assumptions.
 
Consequently, the inequality we need holds for any $y \in [-2; 0]$, and we have to prove it for $y \in [0; 2]$.
Multiplying our inequality by $\frac{b^2c}{a}$, we get 
$$y^4 - (bc + a)y^3 + (b^2c + abc + a - 4) y^2 - (b^2c + ab^2c + abc + 
\frac{a}{b} - 3bc - 3a)y$$
$$ + (\frac{b^2c}{a} + 2ab^2c + ac - 2b^2c - 2abc - 2a +2) =: \psi(y),$$ 
and we want to prove 
that $\psi (y) \geq 0$ for all $y \in [0; 2].$
 
Let $\chi(y) := \psi(y) - \frac{1}{b}(b - a) y,$ whence $\chi(y) \leq \psi(y)$ for all $y \in [0; 2]$.
It is sufficient to prove that $\chi (y) \geq 0$ for all $y \in [0; 2].$ We have  $\chi(0) = \psi(0) = 
\frac{b^2c}{a} + 2ab^2c + ac - 2b^2c - 2abc - 2a + 2 \geq 0,$ as it was previously shown.
We also have  $\chi(2) = \psi(2) - \frac{2}{c}(b - a) \geq 0$ since
$$\psi(2) = -2bc - 2\frac{a}{b} + \frac{b^2c}{a} + ac + 2 =$$
$$\frac{1}{b} \left(2(b - a) + \frac{b^2c}{a}(b - a) - bc(b - a)\right) =$$
$$\frac{1}{b}(b - a) \left(2 + \frac{bc}{a} (b - a)\right) \geq \frac{2}{b} (b - a) \geq 0.$$

Now we consider the following function:  $\nu(y):=\frac{\partial^2 \chi(y)}{\partial y^2} = 
\frac{\partial^2 \psi(y)}{\partial y^2} = 12 y^2 - 6 (bc + a)y + 2(b^2c + abc + a - 4) .$
The vertex point of this parabola is $y_v = \frac{bc + a}{4}  \geq 3.$  
Thus, we can observe that $\nu(y)$ decreases for $y \in [0; 2].$ We have 
$\nu(0) = 2(b^2c + abc + a - 4) > 0,$ and $\nu(2) = 2abc + 2b^2c - 12bc - 10a + 40.$
We want to show that $\nu(2)$ is positive. We have
$$abc + b^2c - 6bc - 5a + 20 = (20 - 5a) + (b^2c - 3bc) + (abc - 3bc) = $$
$$5(4 - a) + bc(c - 3) + bc(a - 3) > 0$$ 
due to our assumptions. We  conclude that $\nu(y)$ is nonnegative for $y \in [0; 2]$, 
and it follows that $\chi^\prime(y)$ increases for $y \in [0; 2].$

We want to show that $\chi^\prime(y) \leq 0$ for $y \in [0; 2]$, and it is sufficient to show 
that $\chi^\prime(2) \leq 0.$ We have
$$\chi^\prime(2) =  \psi^\prime(2) - \frac{b-a}{b} = 15 - 9bc - 5a + 3b^2c + 3abc - ab^2c  =$$
$$ 5(3 - a) + bc(-9 + 3b + 3a - ab) = 5 (3 - a) + bc(a - 3)(3 - b) \leq 0.$$

Thus, $\chi(y)$ decreases, $\chi(2) \geq 0,$ so it is positive for $y \in [0; 2]$. Since 
$\chi(y) \leq \psi(y)$, it follows that 
$\psi(y)$ is positive for $y \in [0; 2].$
\end{proof}

The function $ \varphi$ can be presented in the following form: 
$$ \varphi(x) = (1 - x + \frac{x^2}{q_2} - \frac{x^3}{q_2^2 q_3} + \frac{x^4}{q_2^3q_3^2 q_4}) + $$
$$\sum_{k=5}^\infty \frac{(-1)^k x^k}{q_2^{k-1} q_3^{k-2} \ldots q_k} =: S_4(x,  \varphi) + R_5(x,  \varphi).$$
By Lemma \ref{th:lm2} we have 
\begin{equation}
\label{estbel}
\min_{0 \leq \theta \leq 2\pi}|S_4 (q_2 e^{i \theta},  \varphi)| \geq \frac{q_2}{q_3^2q_4}.
\end{equation}
Now we need the estimation on $|R_5 (q_2 e^{i \theta},  \varphi)|$ from above.

\begin{Lemma}
\label{th:lm3}
Let $R_5(z,  \varphi):= \sum_{k=5}^\infty \frac{(-1)^k z^k}{q_2^{k-1} q_3^{k-2} \ldots q_k}$,  
$q_n $ are increasing in $n$,
and $\lim\limits_{n \to \infty} q_n(f) = c < q_\infty$.
Then 
$$\max_{0 \leq \theta \leq 2\pi}|R_5(q_2e^{i\theta},  \varphi)| \leq \frac{q_2}{q_3^3 q_4^3 - q_3^2}.$$ 
\end{Lemma}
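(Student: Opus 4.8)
The plan is to estimate $|R_5(q_2 e^{i\theta}, \varphi)|$ by the triangle inequality and then recognize the resulting series of moduli as essentially geometric. Since $|q_2 e^{i\theta}| = q_2$, the modulus of the $k$-th term is independent of $\theta$ and equals
$$T_k := \frac{q_2^k}{q_2^{k-1} q_3^{k-2} \cdots q_{k-1}^2 q_k} = \frac{q_2}{q_3^{k-2} q_4^{k-3} \cdots q_{k-1}^2 q_k},$$
so that $\max_{0 \leq \theta \leq 2\pi}|R_5(q_2 e^{i\theta}, \varphi)| \leq \sum_{k=5}^\infty T_k$. It therefore suffices to bound this positive series by the stated closed form.

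Next I would compute the ratio of consecutive terms. A direct cancellation of the products in the denominators gives
$$\frac{T_{k+1}}{T_k} = \frac{1}{q_3 q_4 \cdots q_{k+1}}, \qquad k \geq 5.$$
Because the $q_n$ are increasing, every factor of index at least $4$ in that denominator is $\geq q_4$, while $q_3 \leq q_4$; as $q_3 q_4 \cdots q_{k+1}$ contains $q_3$ together with exactly $k-2 \geq 3$ factors each $\geq q_4$, I obtain the uniform estimate $\frac{T_{k+1}}{T_k} \leq \frac{1}{q_3 q_4^3}$ valid for all $k \geq 5$.

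Consequently the tail is dominated by a geometric series of ratio $1/(q_3 q_4^3) < 1$ (here $q_3 q_4^3 > 1$ since $q_3, q_4 \geq q_2 \geq 3$ in the regime under consideration), and summing from the leading term $T_5 = \frac{q_2}{q_3^3 q_4^2 q_5}$ yields
$$\sum_{k=5}^\infty T_k \leq \frac{q_2}{q_3^3 q_4^2 q_5} \cdot \frac{1}{1 - \frac{1}{q_3 q_4^3}} = \frac{q_2 q_4}{q_3^2 q_5 (q_3 q_4^3 - 1)}.$$
Finally I would invoke monotonicity one last time in the single form $q_4 \leq q_5$, i.e. $q_4/q_5 \leq 1$, which turns the right-hand side into at most $\frac{q_2}{q_3^2(q_3 q_4^3 - 1)} = \frac{q_2}{q_3^3 q_4^3 - q_3^2}$, exactly the claimed bound.

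The cancellation producing the ratio formula is routine; the only point demanding care is calibrating the geometric estimate so that its closed form reproduces the target denominator $q_3^3 q_4^3 - q_3^2$ precisely rather than a coarser expression. The small but decisive step is that the leftover factor $q_4/q_5$, created by starting the geometric comparison at $T_5$, is absorbed by the inequality $q_4 \leq q_5$, which is what makes the clean closed form emerge.
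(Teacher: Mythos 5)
Your proof is correct and follows essentially the same route as the paper: both arguments apply the triangle inequality termwise on $|z|=q_2$ and then dominate the resulting positive series by a geometric series with ratio $\frac{1}{q_3 q_4^3}$, arriving at $\frac{q_2}{q_3^3q_4^3}\cdot\frac{1}{1-\frac{1}{q_3q_4^3}}=\frac{q_2}{q_3^3q_4^3-q_3^2}$. The only cosmetic difference is that you keep the exact leading term $T_5=\frac{q_2}{q_3^3q_4^2q_5}$ and absorb the leftover factor $q_4/q_5\le 1$ at the end, whereas the paper replaces $q_5,q_6,\dots$ by $q_4$ before factoring out $\frac{q_2}{q_3^3q_4^3}$; both yield the identical bound.
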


\begin{proof}
We have 

$$|R(q_2e^{i\theta},  \varphi)| \leq \sum_{k=5}^\infty \frac{q_2^k}{q_2^{k-1} q_3^{k-2} \ldots q_k} = 
\sum_{k=5}^\infty \frac{q_2}{q_3^{k-2} \ldots q_k} =$$ $$\frac{q_2}{q_3^3 q_4^2 q_5} + 
\frac{q_2}{q_3^4 q_4^3 q_5^2 q_6} + \ldots + \frac{q_2}{q_3^{k-2}\ldots q_k} + \ldots $$ 
$$\leq \frac{q_2}{q_3^3 q_4^3} (1 + \frac{1}{q_3 q_4^3} + \frac{1}{q_3^2 q_4^7} + \ldots + 
\frac{1}{q_3^{k-2} q_4^{\frac{(k-2)(k-3)}{2}}} + \ldots) =$$
$$ \frac{q_2}{q_3^3 q_4^3} \cdot \frac{1}{1 - \frac{1}{q_3 q_4^3}} = 
\frac{q_2}{q_3^3 q_4^3 - q_3^2}.  $$
\end{proof}

Let us check that  $\frac{q_2}{q_3^2q_4}  > \frac{q_2}{q_3^3 q_4^3 - q_3^2},$
which is equivalent to $q_4 < q_3q_4^3-1.$ The last inequality obviously holds under 
our assumptions.  Consequently, according to Rouch\'e's theorem, the functions $S_4(z,  \varphi)$ 
and $\varphi (z)$ have the same number of zeros inside the circle $\{ z: |z| < q_2   \}$ counting 
multiplicities. 

It remains to prove that $S_4(z,  \varphi)$  has  zeros in the circle $\{ z: |z| < q_2   \}$.
To do this we need the notion of apolar polynomials and the famous theorem by J.H. Grace.

\begin{Definition} (see, for example \cite[Chapter 2, \$ 3, p. 59]{PS}).
\label{th:r1} Two complex polynomials $P(z) = \sum_{k=0}^n {n \choose k}a_k z^k$ and  
$Q(z) = \sum_{k=0}^n {n \choose k}b_k z^k$ of degree $n$ are called apolar if 
\begin{equation}
\label{apol}
\sum_{k=0}^n (-1)^k {n \choose k} a_k b_{n-k} = 0.
\end{equation}
\end{Definition} 
 
The following famous theorem due to J.H. Grace states that the complex zeros of two apolar polynomials 
cannot be separated by a straight line or by a circle.

{\bf Theorem H} (J.H. Grace, see for example \cite[Chapter 2, \$ 3, Problem 145]{PS}). 
Suppose $P$ and $Q$ are two apolar polynomials 
of degree $n \geq 1.$ If all zeros of $P$ lie in a circular region $C,$ then $Q$ has at least one zero in $C.$ 
(A circular region is a closed or open half-plane, disk or exterior of a disk).

\begin{Lemma} 
\label{th:lm4}
Let $S_4(z,  \varphi) = 1 - z + \frac{1}{q_2}z^2 - \frac{1}{q_2^2q_3}z^3 + 
\frac{1}{q_2^3q_3^2q_4}z^4$ be a polynomial 
and $q_2 \geq 3$. Then $S_4(z,  \varphi)$ has at least one root in the circle $\{z:|z| \leq q_2\}$.
\end{Lemma}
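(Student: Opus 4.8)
The plan is to deduce the result from Grace's theorem (Theorem H). Concretely, I will exhibit a polynomial $P$ of degree $4$, apolar to $S_4(z,\varphi)$, all of whose zeros lie in the closed disk $\{z:|z|\le q_2\}$. Since that disk is a circular region, Grace's theorem then forces the apolar polynomial $S_4(z,\varphi)$ to have at least one zero in it, which is exactly the claim.

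First I would rewrite $S_4$ in the binomial normalization $S_4(z,\varphi)=\sum_{k=0}^4\binom{4}{k} a_k z^k$, so that $a_0=1$, $a_1=-\tfrac14$, $a_2=\tfrac{1}{6q_2}$, $a_3=-\tfrac{1}{4q_2^2q_3}$, $a_4=\tfrac{1}{q_2^3q_3^2q_4}$. For a competitor $P(z)=\sum_{k=0}^4\binom{4}{k} b_k z^k$ the apolarity condition (\ref{apol}) becomes the single linear relation
\[
b_4+b_3+\frac{1}{q_2}b_2+\frac{1}{q_2^2q_3}b_1+\frac{1}{q_2^3q_3^2q_4}b_0=0 .
\]
I would then make the concrete choice $P(z)=(z-q_2)^3(z-w)$, that is, a triple zero on the boundary circle $|z|=q_2$ together with one free zero $w$. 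Expanding $P$ and reading off the $b_k$ turns the apolarity relation into one linear equation for $w$; a routine computation gives the explicit real value
\[
w=\frac{q_2\bigl(1+\tfrac{1}{q_3}\bigr)-4}{\,1-\tfrac{3}{q_3}+\tfrac{4}{q_3^2q_4}\,}.
\]
Since $q_3\ge q_2\ge 3$, the denominator is positive, so $w$ is a well-defined real number and $P$ is a genuine degree-$4$ polynomial apolar to $S_4$.

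The crux is to verify that this fourth zero also lies in the disk, i.e. $|w|\le q_2$; only then do all four zeros of $P$ lie in $\{z:|z|\le q_2\}$. I expect this to be the main (though still elementary) obstacle. Clearing the positive denominator, the bound $|w|\le q_2$ splits into two inequalities which, after simplification, reduce to $q_3q_4(q_2-q_3)\le q_2$ and $4+\tfrac{2q_2}{q_3}\le 2q_2+\tfrac{4q_2}{q_3^2q_4}$. The first is immediate from the monotonicity $q_2\le q_3$, since then its left-hand side is nonpositive; the second follows from $q_2\ge 3$ (whence $2q_2\ge 6$) together with $q_2/q_3\le 1$.

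Consequently all three boundary zeros and the interior zero $w$ of $P$ lie in $\{z:|z|\le q_2\}$, so by Grace's theorem the apolar polynomial $S_4(z,\varphi)$ has at least one zero in the same disk, which proves the Lemma. The only places where the hypotheses enter are the positivity of the denominator and the two final inequalities, all of which use precisely $q_2\ge 3$ and the standing monotonicity $q_2\le q_3\le q_4$.
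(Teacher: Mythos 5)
Your proof is correct and follows essentially the same route as the paper: both exhibit an explicit degree-$4$ polynomial apolar to $S_4(z,\varphi)$ whose four zeros all lie in $\{z:|z|\le q_2\}$ and then invoke Grace's theorem, and your formula for $w$ together with the two inequalities guaranteeing $|w|\le q_2$ all check out. The only difference is the ansatz: the paper takes $Q(z)=z^2\left(z^2+2(q_2-6)z-3q_2(q_2-4)\right)$, whose zeros $0,0,q_2,-3(q_2-4)$ lie in the disk using only $q_2\ge 3$ (together with the standing bound $q_2<4$), whereas your choice $(z-q_2)^3(z-w)$ additionally leans on the monotonicity $q_2\le q_3\le q_4$ --- harmless here, since that is a standing hypothesis of the theorem, though not literally part of the lemma's statement.
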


\begin{proof}
We have $S_4(z,  \varphi) = {4 \choose 0} + {4 \choose 1} (-\frac{1}{4})z + {4 \choose 2} \frac{1}{6q_2}z^2 + 
{4 \choose 3} (-\frac{1}{4q_2^2 q_3})z^3 + {4 \choose 4} \frac{1}{q_2^3q_3^2q_4}z^4$.  Let $Q(z) = 
{4 \choose 2}b_2z^2 + {4 \choose 3}b_3z^3 + {4 \choose 4}z^4.$ Then the condition for $S_4(z,  \varphi)$ 
and $Q(z)$ to be apolar is the following
$${4 \choose 0} - {4 \choose 1}\left(-\frac{1}{4}\right)b_3 + {4 \choose 2}\frac{1}{6q_2}b_2 = 0.$$

We have $1 - b_3 + \frac{b_2}{q_2} = 0.$  Further we choose 
$b_3 = \frac{q_2 - 6}{2}$, and, by the apolarity condition, $b_2 = - q_2(1+\frac{q_2 - 6}{2}).$
So we have 
$$Q(z) = - 6 q_2 \left(1 + \frac{q_2 - 6}{2}\right)z^2 + 4 \left(\frac{q_2 - 6}{2}\right) z^3 + z^4 $$
$$ = z^2 \left( - 3q_2 (q_2 - 4) + 2(q_2-6)z + z^2\right).$$

As we can see,  the zeros  of $Q$ are $z_{1} = 0, z_{2} = 0, z_{3} = q_2, z_{4} = - 3(q_2 - 4).$
To show that $z_4$ lies in the circle of radius $q_2,$  we solve the inequality  $|-3(q_2 - 4)| \leq q_2$. 
Thus, we obtain that if $q_2 \geq 3, $ then all zeros of $Q$ are in the circle $\{z: |z| \leq q_2  \}.$
Since all the zeros of $Q$ are  in the circle $\{z: |z| \leq q_2  \},$ we obtain by the Grace theorem that 
$S_4(z,  \varphi)$ has at least one zero in the circle $\{z:|z| \leq q_2\}.$
\end{proof}

Thus,  $S_4(z,  \varphi)$ has at least one zero in the circle $\{z:|z| \leq q_2\}, $ and, applying Lemma
\ref{th:lm2} to the $S_4(z,  \varphi)$, we conclude that $S_4(z,  \varphi)$ does not have zeros on 
$\{z:|z| = q_2  \}.$ So, the polynomial $S_4(z,  \varphi)$ has at least one zero in the open 
circle $\{z:|z| < q_2\}.$ By the Rouch\'e's theorem, the functions $S_4(z,  \varphi)$ 
and $\varphi (z)$ have the same number of zeros inside the circle $\{ z: |z| < q_2   \},$ 
whence $\varphi$ has at least one zero in the open circle $\{z:|z| < q_2\}.$ If $\varphi $
is in the Laguerre-P\'olya class, this zero must be real, and, since coefficients of $\varphi $
are sign-changing, this zero belongs to $(0; q_2).$ However, by Lemma \ref{th:lm1}, we have
$\varphi (x) >0$ for all $x\in [0; q_2].$ This contradiction shows that $\varphi  \notin \mathcal{L-P}.$

Theorem~\ref{th:mthm1} is proved.

\end{document}